\def\subsubsection{\@startsection{subsubsection}{3}%
  \z@{.5\linespacing\@plus.7\linespacing}{.1\linespacing}%
  {\normalfont\itshape}}
\newtheorem{theorem}{Theorem}[section]
\newtheorem{lemma}[theorem]{Lemma}
\newtheorem{proposition}[theorem]{Proposition}
\newtheorem{corollary}[theorem]{Corollary}
\theoremstyle{definition}
\newtheorem{definition}[theorem]{Definition}
\email{chavdar.lalov@gmail.com}
\title{Self-avoiding walks on lattice strips}
\begin{document}

\maketitle
\begin{center}
R. Dangovski$^1$ ~~~ \and ~ C. Lalov$^2$ 
\end{center}
\begin{center}
1. Massachusetts Institute of Technology, Cambridge, USA\\
2. Geo Milev High School of Mathematics, Pleven, Bulgaria 
\end{center}
\begin{abstract}
We study self-avoiding walks on restricted square lattices, more precisely on the lattice strips $\mathbb{Z} \times \{-1,0,1\}$ and $\mathbb{Z}\times \{-1,0,1,2\}$. We obtain the value of the connective constant for the $\mathbb{Z} \times \{-1,0,1\}$ lattice in a new shorter way and deduce close bounds for the connective constant for the $\mathbb{Z}\times \{-1,0,1,2\}$ lattice. Moreover, for both lattice strips we find close lower and upper bounds for the number of SAWs of length $n$ by using the connective constant. 
\end{abstract}

\section{Introduction}
A self-avoiding walk (SAW) is a path that does not self-intersect, i.e. it does not pass through a given point more than once. SAWs were introduced by Flory, a theoretical chemists, in order to model the behaviour of linear polymers in dilute solutions ([F]  p. 672.). They quickly became an intriguing combinatorial problem for mathematicians and an interesting computational problem for computer scientists (see [S]).  
~\\~\\
In this paper we study SAWs on the two-dimensional square lattice.

\begin{definition}
A self-avoiding walk (SAW) of length $n$ is a sequence of points  $(w_0,w_1,\dots,w_n)$ where $w_0=(0,0)$ and $w_i=(x_i,y_i)$ ($x_i$ and $y_i$ integers), such that no point repeats itself and $|x_{i+1}-x_i|+|y_{i+1}-y_i|=1$ for all $i$, where $i\in \{0,1,\dots,n-1 \}$. We define the move from point $w_i$ to point $w_{i+1}$  to be called a \emph{step}.
\end{definition}

We investigate the number of SAWs on the lattice strips $\mathbb{Z} \times \{-1,0,1\}$ and $\mathbb{Z}\times \{-1,0,1,2\}$. For the lattice $\mathbb{Z}\times \{x,x+1,\dots, y\}$ we denote the number of SAWs of length $n$ with $c_{[x,y]n}$. If $x$ and $y$ are respectively minus and plus infinity then we just write $c_n$.

An exact formula for $c_{[x,y]n} $ has only been obtained for $c_{[0,1]n}$ ([Z], [B], [N]).
\begin{theorem}[Zeilberger 1996]
 For $n\geq2$, the number of SAWs of length $n$ on the grid $\mathbb{Z} \times\{0,1\}$ is \setlength{\abovedisplayskip}{0pt}
$$c_{[0,1]n}=8F_n-\delta_n$$
where $F_n$ is the $n^{th}$ Fibonacci number and $\delta_n=4$ if $n$ is odd and $\delta_n=n$ otherwise. 
\end{theorem}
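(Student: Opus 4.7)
My approach would be a transfer-matrix decomposition that exploits the narrowness of the strip, carried out in three conceptual steps.

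\emph{First, reduce to one-sided walks.} Column $0$ contains only the two vertices $(0,0)$ and $(0,1)$, so since the walk starts at $(0,0)$ and cannot revisit vertices, after the walk leaves column $0$ it can re-enter column $0$ only once, and only at $(0,1)$. Hence every SAW from the origin belongs to one of three families: (a) those contained in $x\ge 0$; (b) those contained in $x\le 0$; (c) those crossing sides exactly once through $(0,1)$. Families (a) and (b) intersect only in the two trivial walks inside column $0$ (the empty walk and the step $(0,0) \to (0,1)$), and each walk in (c) decomposes uniquely as a pair of one-sided walks glued at $(0,1)$. Thus the problem reduces to counting one-sided SAWs.

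\emph{Next, count one-sided walks via a transfer matrix.} Let $a_n$ (respectively $b_n$) be the number of SAWs of length $n$ from $(0,0)$ contained in $x\ge 0$ whose last vertex lies on row $0$ (respectively row $1$). In a one-sided SAW, between consecutive rung (vertical) edges the horizontal motion must be monotone, for otherwise the walk would revisit a vertex on one of the two rows. Scanning the walk column by column then yields a two-state linear system whose characteristic polynomial is $z^2 - z - 1$. Consequently $a_n$ and $b_n$ are explicit integer combinations of Fibonacci numbers, and the generating functions $A(t), B(t)$ are rational with denominator $1-t-t^2$.

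\emph{Finally, assemble and match the correction term.} Summing the contributions from the three families, and subtracting the overcount coming from the two walks inside column $0$, yields $\sum_n c_{[0,1]n}\, t^n$ as a rational function whose partial-fraction expansion produces a leading Fibonacci term identifiable as $8F_n$, together with a lower-order correction. The \emph{main obstacle} will be identifying this correction as exactly $\delta_n$: the Fibonacci main term is forced by the characteristic polynomial $z^2-z-1$, but the parity-dependent correction ($\delta_n=4$ for odd $n$, $\delta_n=n$ for even $n$) requires delicate boundary bookkeeping, since the linear-in-$n$ behaviour for even $n$ signals a secondary pole at $t=1$ in the generating function coming from a small number of exceptional walk shapes. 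The cleanest finish I have in mind is to verify the identity directly for $n\le 5$ by hand and then check that both sides satisfy the same short linear recurrence dictated by the transfer matrix, closing the argument by induction.
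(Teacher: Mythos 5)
The paper does not actually prove this statement: Theorem 1.2 is quoted as a known result with references to Zeilberger [Z], Benjamin [B] and Nikolov [N], so there is no in-paper proof to compare against. Your outline is nevertheless close in spirit to the published proofs: Zeilberger's original argument encodes the walks as a regular language and reads off a rational generating function, and Benjamin's proof uses essentially your decomposition at column $0$ into one-sided pieces glued at $(0,1)$. The skeleton of your plan is sound --- column $0$ has only two vertices, so the three-family split and the reduction to one-sided walks are correct, and the one-sided counts are indeed Fibonacci-like with characteristic polynomial $z^2-z-1$.

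The gap is in the finish. You explicitly defer the only hard part, namely pinning down the correction term $\delta_n$, and the fallback you propose does not yet close the argument. To ``check that both sides satisfy the same short linear recurrence'' you must first \emph{derive} that recurrence for $c_{[0,1]n}$, which means actually carrying out the transfer-matrix computation and the gluing sum for family (c) --- i.e.\ doing the bookkeeping you set aside; the recurrence is not available for free. Moreover, the minimal polynomial annihilating $8F_n-\delta_n$ is $(z^2-z-1)(z-1)^2(z+1)^2$, of degree $6$ (the factor $(z-1)^2(z+1)^2$ is needed because $\delta_n$ is linear in $n$ on even indices and constant on odd ones), so verifying the identity only for $n\le 5$ supplies too few base cases; you would need six consecutive values starting from wherever the recurrence is known to hold, say $n=2,\dots,7$. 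With the generating function actually computed and the base-case range enlarged, your plan does become a complete proof, and at that point it is essentially the Zeilberger--Benjamin argument rather than a new route.
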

It is conjectured $c_n \sim A\mu^n n^v$, 
where $A$, $\mu$ and $v$ are constants. The most studied one of them is the \emph{connective constant} ($\mu$).

\begin{definition}
 The limit $\lim_{n \rightarrow \infty}\sqrt[n]{c_n}$ is called the \textit{connective constant} and we denote it by $\mu_{\mathbb{Z}\times\mathbb{Z}}$. For the lattice strip $\mathbb{Z}\times\{x,x+1,\dots,y\} $ we define the connective constant analogously and denote it by $\mu_{[x,y]}$
\end{definition}

The connective constant could be proven to exist and to be finite by using Fekete's lemma and that $c_{n}c_{m}\geq c_{n+m}$. There are also strong bounds for $c_{n}$ using $\mu_{\mathbb{Z} \times \mathbb{Z}}$ in [MS]. However, even the value for $\mu_{\mathbb{Z} \times \mathbb{Z}}$ is not known.

A way one could obtain $\mu$ is to observe when does the series of the generating function (for reading on generating functions see [W1])
$$G(t)_{[x,y]}=1+c_{[x,y]1}t+c_{[x,y]2}t^2+…$$
converge. $G(t)_{[x,y]}$ has been found only when $x=0$ and $y=1$ (see [Z]) and when $x=-1$ and $y=1$ (see [D]). Respectively, $\mu_{[0,1]}$ equals the golden ratio and $\mu_{[-1,1]}\approx 1.914\dots$.

A major breakthrough was made by H. Kesten. He showed that there exists a class of walks called bridges which have the same connective constant as all SAWs on the square lattice (see [K]).

\begin{definition} Let us have a self-avoiding walk with coordinates $(w_0,w_1,\dots,w_n)$, as $w_0=(0,0)$. Then a SAW is a bridge if  $x_0 < x_j \leq x_n$ for all $j>0$, where $x_i$ is the $x$-axis coordinate of the $i^{th}$ point. We use $b_n$ to denote the number of $n$-step bridges on the $\mathbb{Z} \times \mathbb{Z}$ grid. Kesten showed that $b_n^\frac{1}{n}$ converges to $\mu_{\mathbb{Z} \times \mathbb{Z}}$ as $n {\rightarrow} \infty $. For the lattice strip $\mathbb{Z}\times\{x,x+1,\dots,y\}$ we denote the number of bridges by $b_{[x,y]n}$.
\end{definition}

We show Kesten's result is true for the $\mathbb{Z} \times \{-1,0,1\}$ and $\mathbb{Z}\times \{-1,0,1,2\}$ lattices  as well. We do that by using the Hammersley-Welsh method (see [MS] p. 57). 

In this paper we use bridges in order to find $\mu_{[-1,1]}$ in a new shorter way, decreasing the length of the proof dramatically, and to obtain close lower and upper bounds for $\mu_{[-1,2]}$. Our method yields that $\frac{1}{\mu_{[-1,1]}}$ is the smallest modulus root of the equation $1-t-2t^3-t^4-2t^5-2t^6=0$ and that $2.050\leq\mu_{[-1,2]}\leq2.166$.  Moreover, the Hammersley-Welsh method allows us to derive close lower and upper bound for the number of SAWs of lenght $n$ by using $\mu$. In the end we do not have the exact number of walks but we do have a good idea for their growth rate.

\section{ The connective constant on the $\mathbb{Z} \times \{-1,0,1\}$ lattice}

We prove the following result 
\begin{equation}
    \mu_{[-1,1]}^n \leq c_{[-1,1]n}\leq \mu_{[-1,1]}^{n+1}((n+1)+2(n+1)^2+3(n+1)^3+2(n+1)^4+(n+1)^5)\end{equation}
for $n\geq1$ in appendix B by using the Hammersley-Welsh Method. From the proof one gets as a bonus that $\lim_{n\rightarrow\infty}\sqrt[n]{b_{[-1,1]n}}=\mu_{[-1,1]}$.

Therefore, the problem translates to counting bridges. To do that we use Zeilberger's Decomposition Method (see [Z]) and \emph{irreducible bridges}. Zeilberger's Decomposition Method is an ``alphabetical'' approach for describing the self-avoiding walk. A generalisation of this method is presented in [W2] in order to count up-side SAWs. One divides the SAWs in their basic movements which they perform and for each of them one chooses a symbol to represent it. These symbols become our ``letters'' of an ``alphabet'' for creating different ``words'' (SAWs).  For example, our language for the  $\mathbb{Z} \times \{-1,0,1\}$ grid consists of 3 different two letter combinations allowing us to construct every bridge.  
\begin{itemize}
\item $I_O$ --- denotes an irreducible bridge between an inner and outer line;
\item $O_O$ --- denotes an irreducible bridge between the two outer lines;
\item $O_I$ --- denotes an irreducible bridge between an outer and inner line.
\end{itemize}
The two outer lines are the horizontal lines $y=1$ and $y=-1$. The inner line is the horizontal line $y=0$.

We proceed with the explanation of what is an \textbf{irreducible bridge}. Whenever we join two bridges of lengths $m$ and $n$ we get another bridge of length $m + n$. As a result, every bridge can be decomposed into \textit{irreducible bridges} (bridges which cannot be decomposed further). However, now a straight line of $k$ right steps is actually $k$ irreducible bridges of length $1$ glued to each other. Therefore, for our convenience let us make the following correction in our understanding of what an irreducible bridge will be (this new understanding is used above, when describing the two-letter combinations $I_O$, $O_O$ and $O_I$). If we have a line of $k$ irreducible bridges of length $1$ and then another irreducible bridge of length bigger than $1$, then we will consider all these $k+1$ irreducible bridges as a single irreducible bridge (see fig. 1). We will relate to the $k$ right steps in the beginning of the irreducible bridge as its \textbf{tail}.

\begin{figure}[h!]

\begin{tikzpicture}[scale=0.8]
\draw[step=1cm,gray,very thin] (-2,-2) grid (6,0);

\draw[blue,ultra thick] (-2,-2) -- (6,-2) -- (6,-1) -- (1,-1) -- (1,0) -- (6,0);
\end{tikzpicture}
\caption{An $O_O$ irreducible bridge with a tale of length $2$.}
\end{figure}
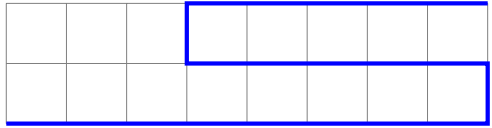
\begin{proposition}The following code using $O_O$, $O_I$ and $I_O$ describes all bridges. 
$$[I_O {O_O}^* O_I]^*\widetilde{I_O{ O_O}^*} r^*$$ 
where the symbol '*' is an indefinite superscript $(\geq0)$, the tilde indicates that the movement may not be executed and $r$ is a step to the right.
\end{proposition}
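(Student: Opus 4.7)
The plan is to decompose any bridge $W$ into irreducible pieces (using the new convention in which tails are absorbed) and verify that the resulting sequence of letters matches the claimed regular expression. First I would invoke the standard fact that any bridge admits a decomposition into strictly irreducible bridges by iterating the cut criterion: a bridge $(w_0,\dots,w_n)$ is reducible at an interior vertex $w_k$ precisely when $x_j\le x_k$ for all $j\le k$ and $x_j>x_k$ for all $j>k$. After this, I would apply the tail convention to merge every maximal run of length-$1$ (single right-step) pieces into the following length-$>1$ irreducible piece; any such run that is not followed by a longer piece remains at the end of the walk as the trailing factor $r^*$.

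The second step is to classify each length-$>1$ irreducible piece on the three-row strip by its start and end horizontal lines. The crucial claim is that the start and end $y$-coordinates of such a piece must differ. I would argue this by contradiction: if the piece began and ended on a common line $y=a$, I would locate an interior vertex $w_k$ on that line at which $x_k=\max_{j\le k}x_j$ and for which $x_j>x_k$ holds for every $j>k$, so the cut criterion would force a reduction. Because the only available horizontal lines are $y=-1,0,1$, this restricts the possible types of length-$>1$ pieces to exactly $I_O$, $O_I$, and $O_O$, matching the three-letter alphabet.

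The final step is to read off the grammar from the start vertex. The walk begins at $(0,0)$ on the inner line, so if a grouped piece appears at all, the first one must start on the inner line and is therefore of type $I_O$. Since the end-line of each piece equals the start-line of the next, the three admissible types force the transitions $I_O\to\{O_O,O_I\}$, $O_O\to\{O_O,O_I\}$, and $O_I\to\{I_O\}$; these transitions are exactly encoded by $[I_O\,O_O^*\,O_I]^*\widetilde{I_O\,O_O^*}$, and the tail convention supplies the trailing $r^*$. I expect the main obstacle to lie in the structural claim of the second step: the cut criterion is easy to state, but ruling out inner-to-inner and outer-to-same-outer irreducible pieces requires carefully controlling how the walk can oscillate across the three rows between leaving and returning to its starting line, and one must exhibit a candidate cut vertex that simultaneously satisfies both monotonicity conditions on the $x$-sequence.
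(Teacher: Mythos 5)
The paper states this proposition without proof --- the admissible letters are simply read off from the irreducible-bridge patterns described by inspection in Section~2 --- so there is no official argument to compare against. Your skeleton (decompose a bridge into irreducible pieces via the cut criterion, absorb runs of single right steps as tails into the following long piece, classify each long piece by its start and end line, and read off the line-matching transitions) is the right one and is exactly what the statement implicitly rests on; your third step, deriving $[I_O\,O_O^*\,O_I]^*\widetilde{I_O\,O_O^*}\,r^*$ from the transitions and the fact that the walk starts on the inner line, is correct.

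The genuine gap is in your second step, and the specific strategy you propose for it would fail. The claim that a length-$>1$ irreducible piece on $\mathbb{Z}\times\{-1,0,1\}$ cannot start and end on the same horizontal line is true, but it cannot be proved by ``locating an interior cut vertex $w_k$ \emph{on that line}'': the forced cut vertex generally lies on a different line. For instance, the bridge $(0,1),(1,1),(2,1),(3,1),(3,0),(2,0),(1,0),(1,-1),(2,-1),(3,-1),(4,-1),(4,0),(4,1)$ starts and ends on $y=1$, yet its unique cut vertex is $(3,-1)$, which lies on $y=-1$; no interior vertex on $y=1$ satisfies both monotonicity conditions. The mechanism that actually proves the lemma is special to width three: irreducibility forces the walk to return to column $x_0+1$ after leaving it, which consumes the at most two unused cells of that column; once a column is completely occupied the walk can never again move to its left, so either the piece terminates the moment it exhausts a column --- and a short case analysis shows it then lands on a line different from the one it started on, producing exactly the $I_O$, $O_I$ and spiral $O_O$ patterns of Table~1 --- or the last visit to the exhausted column is an unavoidable cut vertex. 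Until an argument of this kind (or an explicit exhaustion of the irreducible bridges, as the paper tacitly assumes) is supplied, the reduction of the alphabet to the three letters $I_O$, $O_O$, $O_I$ --- in particular the absence of inner-to-inner and outer-to-same-outer pieces --- remains unestablished, and with it the regular expression.
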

The tilde and star can be considered as the ``punctuation'' needed for our ``language''.

In order to deduce the generating function of all bridges we need to find the generating function for each of the  types of irreducible bridges --- $O_O$, $O_I$ and $I_O$.

In this section we will obtain the generating function of the most complex one, the $O_O$ type. Firstly, we derive the generating function of the irreducible bridges without a tail. Let us assume we are on the higher outer line (because the grid is symmetric it does not matter). The irreducible bridges follow a certain pattern. For $k\geq0$ they always consists of $k+1$ steps to the right followed by a step down and $k$ steps to the left. Then we have another step down and $k$ steps to the right. The generating function is:
$$\frac{t^3}{1-t^3}$$
We can add the tail by multiplying the generating function by $\frac{1}{1-t}$.
$$G_{OO}=\frac{t^3}{1-t-t^3+t^4}$$

We find the generating functions of the other types of irreducible bridges similarly (Fig. 2).
\begin{center}
\def\arraystretch{1.5}
\begin{tabular}{|c|c|}\hline
Irred. Bridge & Generating function\\ \hline
    $O_I$ & $\frac{t^2}{(1-t)}=t^2+t^3+t^4+t^5+\dots$  \\[3pt ] \hline
     $I_O$ & $\frac{2t^2}{(1-t)}=2t^2+2t^3+2t^4+2t^5+\dots$\\[3pt]\hline
     $O_O$ & $\frac{t^3}{1-t-t^3+t^4}=t^3 + t^4 + t^5 + 2 t^6+\dots$ \\[3pt] 
     \hline
\end{tabular} 
\end{center}
\begin{center}
    Table 1.
\end{center}
We need to calculate the generating function  of ${O_O}^*$. ${O_O}*$ is just an indefinite number of $O_O$ irreducible bridges glued to each other. Therefore, the generating function is 
$$
G(t)_{OO*}= \frac{1}{1-\frac{t^3}{1-t-t^3+t^4}}=\frac{1-t-t^3+t^4}{1-t-2t^3+t^4}.$$
Hence, $$G(t)_{[-1,1]}= \frac{1}{1-G(t)_{IO}G(t)_{OO*}G(t)_{OI}}(1+G(t)_{IO}G(t)_{OO*})G(t)_{u*}=$$

$$=
\tfrac{1 - 5 t + 12 t^2 - 22 t^3 + 35 t^4 - 47 t^5 + 56 t^6 - 58 t^7 + 
   49 t^8 - 37 t^9 + 25 t^{10} - 11 t^{11} + 2 t^{12}}{1 - 6 t + 15 t^2 - 
   24 t^3 + 35 t^4 - 48 t^5 + 53 t^6 - 46 t^7 + 31 t^8 - 16 t^9 + 
   4 t^{10} + 10 t^{11} - 17 t^{12} + 10 t^{13} - 2 t^{14}}.$$
Hence, we have obtained our desired generating function. In order to deduce the value of $\mu_{[-1,1]}$, we find the smallest modulus root of the denominator of $G(t)_{[-1,1]}$, and take its reciprocal:
\begin{proposition}The value of the connective constant on the $\mathbb{Z}\times \{-1,0,1\}$ lattice is approximately:
 $$ \mu_{[-1,1]} \approx \frac{1}{0.522295}\approx 1.914.$$
 \end{proposition}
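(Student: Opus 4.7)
The plan is to extract $\mu_{[-1,1]}$ as the reciprocal of the radius of convergence of the bridge generating function $G(t)_{[-1,1]}$ just assembled. This identification rests on two ingredients already in place: (i) the Hammersley--Welsh bounds (1), proved in appendix B, which force $b_{[-1,1]n}^{1/n}\to \mu_{[-1,1]}$ so that the bridge series controls the same exponential growth rate as the full SAW count; and (ii) Pringsheim's theorem, which, because the assembled generating function is rational with nonnegative power-series coefficients, guarantees that the radius of convergence $\rho$ is itself a (positive real) singularity. Consequently $\rho$ must be the smallest positive real root of the denominator of $G(t)_{[-1,1]}$ surviving after common factors with the numerator are cancelled, and then $\mu_{[-1,1]}=1/\rho$.

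Concretely I would proceed in three steps. First, compute $\gcd(N,D)$ where $N$ and $D$ are the numerator and denominator written above, and verify it is $1$, so that the displayed denominator genuinely controls the poles. Second, numerically isolate the real roots of $D$ in $(0,1)$ -- for instance by a Sturm chain or by sign-change bracketing -- noting that $D(0)=1>0$ and that the exponential growth of $b_{[-1,1]n}$ forces at least one root in $(0,1)$. Third, identify the smallest such root $\rho$, which numerical computation gives as $\rho\approx 0.522295$, and invert.

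The only genuine obstacle is certifying that the value $0.522295$ really is the smallest positive real root of $D$ (and hence the radius of convergence), rather than an artifact of numerical rounding near some cluster of nearby roots. This is handled by a standard certified root-isolation routine such as a Sturm sequence on $[0,1]$ combined with interval-arithmetic refinement; no complex-root comparison is needed because Pringsheim already ensures the radius of convergence lies on the positive real axis. Once this verification is in place, the conclusion $\mu_{[-1,1]} = 1/\rho \approx 1.914$ follows directly.
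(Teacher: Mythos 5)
Your proposal follows essentially the same route as the paper: identify $\mu_{[-1,1]}$ with the reciprocal of the radius of convergence of the bridge generating function just assembled (justified by the Hammersley--Welsh bounds of appendix~B, which give $\lim_{n\to\infty}\sqrt[n]{b_{[-1,1]n}}=\mu_{[-1,1]}$), and locate that radius as the smallest root of the denominator, numerically $\rho\approx 0.522295$. The only difference is that you make explicit two hygiene steps the paper leaves implicit --- invoking Pringsheim's theorem to place the dominant singularity on the positive real axis and checking that no numerator--denominator cancellation removes it --- which is a welcome tightening but not a different argument.
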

However, if we look carefully, the real structure of the SAW is encoded in the generating function of $[I_O {O_O}^* O_I]^*$. One could only calculate that generating function and it turns out that the desired root is really there. The denominator reduces to the simple polynomial $1-t-2t^3-t^4-2t^5-2t^6$.

\section{Self-avoiding walks on the $\mathbb{Z} \times \{-1,0,1,2\}$ grid}

In appendix B we prove that \begin{equation}\mu_{[-1,2]}^n \leq c_{[-1,2]n}\leq \mu_{[-1,2]}^{n+1}P(n)\end{equation}
where $P(n)=(n+1)+2(n+1)^2+3(n+1)^3+4(n+1)^4+3(n+1)^5+2(n+1)^6+(n+1)^7$ for $n\geq1$
by using the Hammersley-Welsh Method. It yields as a bonus that $\lim_{n\rightarrow\infty}\sqrt[n]{b_{[-1,2]n}}=\mu_{[-1,2]}$.

We again introduce a similar linguistic approach to the problem and use the same definition for an irreducible bridge. We use the same irreducible bridges of types $O_O$, $O_I$, $I_O$, however, we do introduce one more. $I_I$ will denote an irreducible bridge between the two inner lines, because now we have two outer ($y=2$ and $y=-1$) and two inner ($y=0$ and $y=1$) lines.

\begin{proposition}
Every bridge can be encoded in the following way:
$$[{I_I}^* I_O {O_O}^* O_I]^* {I_I}^* \widetilde{I_O{ O_O}^*} r^*.$$
\end{proposition}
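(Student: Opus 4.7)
The plan is to adapt the argument that produced the regular expression in Proposition~2.1 to the four-row strip, the only novelty being the extra type $I_I$ of irreducible bridge between the two inner rows. As in the three-row case, I would first observe that every bridge on $\mathbb{Z}\times\{-1,0,1,2\}$ decomposes uniquely as a concatenation of irreducible bridges followed by a (possibly empty) terminal run of right steps $r^*$: the tail-absorbing convention of Section~2 guarantees that any maximal block of right steps lying \emph{between} two non-trivial irreducible bridges is absorbed into the subsequent one as its tail, and only the right-step run following the final vertical excursion has nothing to attach to, which is exactly what produces the trailing $r^*$.

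Next, I would classify each irreducible bridge by whether its starting row and its ending row are \emph{inner} ($y\in\{0,1\}$) or \emph{outer} ($y\in\{-1,2\}$); this assigns a type $X_Y$ with $X,Y\in\{I,O\}$. Since $w_0=(0,0)$ lies on an inner row, the sequence of types in the decomposition of a bridge is an accepted word of the two-state automaton with states $\mathbf I,\mathbf O$, initial state $\mathbf I$, both states accepting, and transitions $I_I,I_O$ leaving $\mathbf I$ and $O_O,O_I$ leaving $\mathbf O$. A direct inspection shows that an accepted word consists of some number of complete $\mathbf I\!\to\!\mathbf O\!\to\!\mathbf I$ cycles (each reading ${I_I}^*\,I_O\,{O_O}^*\,O_I$) followed by an optional half-cycle $\mathbf I\!\to\!\mathbf O$ (reading ${I_I}^*\,I_O\,{O_O}^*$); concatenating these and appending the terminal $r^*$ yields precisely the claimed expression $[{I_I}^* I_O {O_O}^* O_I]^* {I_I}^* \widetilde{I_O\,{O_O}^*}\, r^*$.

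The main obstacle will be the bookkeeping around the new $I_I$ type together with the tail convention. I would verify that (i) an irreducible bridge whose starting and ending rows are both inner is correctly labelled $I_I$ irrespective of whether it dips into an outer row in its interior, since the type depends only on the endpoints; (ii) right-step blocks appearing between consecutive irreducible bridges are unambiguously assigned to the following one, so the decomposition into typed irreducible bridges is canonical; and (iii) the degenerate case in which the entire bridge consists solely of right steps is correctly produced by taking all starred blocks empty and suppressing the tilde, leaving only $r^*$. A related minor point, already implicit in the three-row treatment, is that the automaton argument above relies on types being compatible in exactly the state-machine sense, which in turn relies on the definition of each type $X_Y$ being made strictly in terms of the inner/outer classification of the two endpoint rows.
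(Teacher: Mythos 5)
The paper states this proposition (like its three-row analogue, Proposition~2.1) with no proof at all, so there is nothing to compare your argument against; you are supplying a missing argument rather than reproducing one. Your outline is the right one: the unique decomposition of a bridge into irreducible bridges, the tail convention assigning each maximal interior run of right steps to the following irreducible bridge (leaving only the terminal run as $r^*$), and the two-state inner/outer automaton started in state $\mathbf I$ because $w_0=(0,0)$ lies on an inner row. The computation that the accepted words of that automaton are exactly $[{I_I}^* I_O {O_O}^* O_I]^* {I_I}^*\widetilde{I_O {O_O}^*}$ is correct, and your edge cases (ii) and (iii) are genuine but routine.

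The one substantive point you finesse rather than prove is hidden in your item (i). You make the regular expression automatic by \emph{defining} the type of an irreducible bridge by the inner/outer class of its two endpoint rows. But the paper's letters are defined as bridges ``between the two inner lines,'' ``between the two outer lines,'' etc.\ --- that is, between two \emph{distinct} rows --- and the subsequent generating-function computations (Table~2, the simple/complicated patterns) depend on that reading. For your encoding to be the paper's encoding, you must also show that no irreducible bridge of length at least $2$ on $\mathbb{Z}\times\{-1,0,1,2\}$ starts and ends on the same horizontal row, and that an outer-to-outer (resp.\ inner-to-inner) irreducible bridge necessarily joins the two distinct outer (resp.\ inner) rows. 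This is true but not free: one has to use irreducibility, which forces the walk to re-enter its first and last columns after backtracking, and then check that the four available rows do not leave room to close the excursion at the starting height (for span $2$ this is a short case analysis on the monotone vertical runs in columns $1$ and $2$; larger spans reduce to the same obstruction in the extremal columns). Without that verification, your item (i) silently broadens the meaning of $I_I$ and $O_O$, which does not falsify the displayed regular expression but does detach it from the letters the rest of Section~3 computes with.
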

Hence, the problem translates to finding the generating function for each of the four types of irreducible bridges. However, on the $\mathbb{Z} \times \{-1,0,1,2\}$ lattice, SAWs have much more freedom and thus, their behaviour is harder to describe. If we want to count all bridges, we will have to count SAWs, such as the one shown in fig. 2, and in some way include it in our generating function. The calculations soon becomes quite unpleasant. Nevertheless, we can use our method to derive close bounds for the connective constant. For example, if we count parts of the irreducible bridges of each type ($O_O$, $I_O$, \dots) and analyse the resulting generating function, we will obtain a lower bound for the connective constant. The same reasoning is used for for the upper bound.

\begin{figure}[h!]
\begin{tikzpicture}[scale=0.7]
\draw[step=1cm,gray,very thin] (-2,-2) grid (10,1);
\draw[blue,ultra thick] (-2,1) -- (5,1) -- (5,0) -- (6,0) -- (8,0) -- (8,1) -- (10,1) -- (10,-1) -- (4,-1) -- (4,0) -- (1,0) -- (1,-1) -- (1,-2) -- (2,-2) -- (2,-1) -- (3,-1) -- (3,-2) -- (10,-2) ; 
 
\end{tikzpicture}

\caption{An $O_O$ irreducible bridge with a tail of length $2$. }
\end{figure}
We show examples of how we can obtain ``good''
bounds for the connective constant, although if one makes more calculations, even better bounds could be found.
\subsection{Lower bound for the connective constant}
For our lower bound we need to count part of the irreducible bridges for each of the types $O_O$, $O_I$, $I_I$ and $I_O$. Let us count only those which do not make a step to the left. Hence, we will count right-side walks, whose first step is to the right. We present a table with the generating functions:
\begin{center}
\def\arraystretch{1.5}

\begin{tabular}{|c|c|}\hline
Irred. Bridge & Generating function\\ \hline

    $I_I$ &  $\frac{t^2}{1-t}=t^2+t^3+t^4+t^5\dots$ \\[3pt] \hline
    $O_I$ & $\frac{t^2+t^3}{(1-t)}=t^2+2t^3+2t^4+2t^5+\dots$ \\[3pt] \hline
     $I_O$ & $\frac{t^2+t^3}{(1-t)}=t^2+2t^3+2t^4+2t^5+\dots$ \\[3pt] \hline
     $O_O$ & $\frac{t^4}{(1-t)}=t^4 + t^5 + t^6 + t^7+\dots$ \\[3pt] 
     \hline
\end{tabular}
\end{center}
\begin{center}
Table 2.
\end{center}

Therefore, the generating function of all right-side SAWs, which start with a right step is:
$$\frac{1 - t + t^2 + t^3 - t^4}{1 - 2 t + t^3 - 2 t^4 - t^5}=1 + t + 3 t^2 + 6 t^3 + 12 t^4 + 24 t^5\dots $$.

The radius of convergence is approximately $0.487645$.
\begin{corollary}The following lower bound for the connective constant is true:
$$\frac{1}{0.487645} \approx 2.050\leq \mu_{[-1,2]}.$$
\end{corollary}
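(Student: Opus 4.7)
The plan is to construct a generating function $\tilde{G}(t)=\sum_n \tilde{b}_n t^n$ that coefficient-wise undercounts the bridges on $\mathbb{Z}\times\{-1,0,1,2\}$, and to read off the lower bound on $\mu_{[-1,2]}$ from its radius of convergence. By inequality~(2) proved in Appendix~B, $b_{[-1,2]n}^{1/n}\to \mu_{[-1,2]}$, so whenever $0\le \tilde{b}_n \le b_{[-1,2]n}$ for every $n$ and $\tilde{G}$ has radius of convergence $R$, Cauchy--Hadamard delivers $\mu_{[-1,2]}\ge 1/R$.

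To build $\tilde{G}$ I would single out the sub-family of \emph{right-going} irreducible bridges --- those whose first step is to the right and which contain no left step at all. On a width-four strip such walks are extremely rigid: after a (possibly empty) tail of $k\ge 0$ initial right steps, the remainder is forced to be a short monotone vertical segment terminated by a right step, together with an additional monotone vertical piece if the type requires crossing an interior line. Enumerating these walks by length for each of the four types $I_I$, $I_O$, $O_I$, $O_O$ recovers exactly Table~2 --- for instance, a right-going $I_I$ irreducible bridge consists of $k$ right steps, one up (or down) step, and one right step, contributing $t^2/(1-t)$. Substituting Table~2 into the regular expression $[{I_I}^* I_O {O_O}^* O_I]^* {I_I}^* \widetilde{I_O {O_O}^*} r^*$ of Proposition~3.1 via the standard translation $X^*\mapsto 1/(1-G_X(t))$ and simplifying then produces
$$\tilde{G}(t)=\frac{1-t+t^2+t^3-t^4}{1-2t+t^3-2t^4-t^5}.$$

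The denominator is a low-degree polynomial, and a routine numerical step locates its smallest-modulus root at $t_\ast \approx 0.487645$; since $\tilde{G}$ has non-negative coefficients, $t_\ast$ is also its radius of convergence, and the opening paragraph yields $\mu_{[-1,2]}\ge 1/t_\ast\approx 2.050$. The main obstacle is combinatorial rather than algebraic: one has to verify that the rigidity description enumerates each right-going irreducible bridge exactly once, and that plugging Table~2 into the regular expression of Proposition~3.1 preserves the unambiguity of the bridge decomposition, so that $\tilde{b}_n \le b_{[-1,2]n}$ holds term by term. Once this is confirmed the lower bound drops out immediately.
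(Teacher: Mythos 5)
Your proposal is correct and follows essentially the same route as the paper: restrict to the left-step-free (right-going) irreducible bridges of each type, plug the resulting generating functions (your Table~2) into the regular-expression decomposition of Proposition~3.1 to get $\frac{1-t+t^2+t^3-t^4}{1-2t+t^3-2t^4-t^5}$, and take the reciprocal of its radius of convergence $\approx 0.487645$. Your explicit appeal to Cauchy--Hadamard together with $b_{[-1,2]n}^{1/n}\to\mu_{[-1,2]}$ merely spells out the comparison step the paper leaves implicit.
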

\subsection{Upper bound for the connective constant}
We proceed with the upper bound. Through exhaustive search we discover that there are 2 types of irreducible bridges, which follow certain patterns:

We may have a tail. Then  we have a step to the right and respectively one, two or three steps upward or downward depending on which type of irreducible bridge we are performing - we will call these irreducible bridges \textbf{simple} (fig. 3).

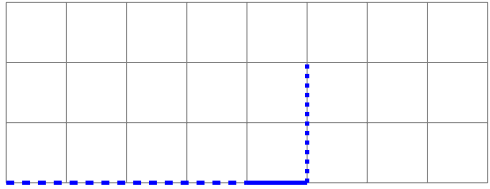
\begin{figure}[h!]
\begin{tikzpicture}[scale=0.8]
\draw[step=1cm,gray,very thin] (-2,-2) grid (6,1);
\draw[blue,ultra thick,dashed] (-2,-2) -- (2,-2);
\draw[blue,ultra thick] (2,-2) -- (3,-2);
 \draw[blue,ultra thick,dotted](3,-2) -- (3,0) ; 
 
\end{tikzpicture}

\caption{A simple $O_I$ irreducible bridge. We have a tail of length four followed by a step to the right. Afterwards we have two steps upwards.}
\end{figure}

The other pattern is as follows. We assume we are on one of the two lower lines, parallel to the $x$-axis ($y = 0$ and $y = −1$), as the other case is analogical. We may have a tail. Then we have 3 avoidable SAWs on strips with length 1, which go in ``opposite'' directions, as the first walk goes right (on the strip $\mathbb{Z} \times \{-1, 0\}$), the second one goes left (on the strip $\mathbb{Z} \times\{ 0,1\}$), and the third one goes to the right again (on the strip $\mathbb{Z} \times\{1,2\}$). Their starting and ending points are determined by the type of irreducible bridge. We call these bridges \textbf{complicated} (fig. 4).

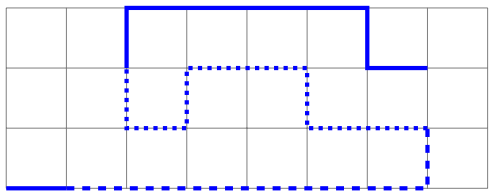
\begin{figure}[h!]
\begin{tikzpicture}[scale=0.8]
\draw[step=1cm,gray,very thin] (-2,-2) grid (6,1);
\draw[blue,ultra thick] (-2,-2) -- (-1,-2);
\draw[blue,ultra thick,dashed] (-1,-2) -- (2,-2) -- (5,-2) -- (5,-1);
 \draw[blue,ultra thick,dotted](5,-1) -- (3,-1) -- (3,0) -- (1,0) -- (1,-1) -- (0,-1) -- (0,0) ;
\draw[blue,ultra thick] (0,0) -- (0,1) -- (4,1) -- (4,0) -- (5,0); 
 
\end{tikzpicture}

\caption{A complicated $O_I$ irreducible bridge. We have a tail of length $1$. Afterwards we have a walk that goes right on the strip
$\mathbb{Z} \times \{-1, 0\}$ of length $7$. Then a walk that goes left on the strip $\mathbb{Z} \times\{0, 1\}$ of length $8$ followed by a walk that goes right on the strip $\mathbb{Z} \times\{1, 2\}$ of length $7$.
}
\end{figure}
First, we examine irreducible bridges of type $O_O$. 

Let us imagine a complicated bridge that starts from the higher outer line.  We ignore the tail for now and perform the following operation: do not move the walk that goes right on the grid $\mathbb{Z} \times\{1, 2\}$. However, we add a right step where it ends and place the beginning of the walk that goes left in the end of the added right step and rotate it about its beginning by $180$ degrees. It became a walk that goes right. We proceed by adding another right step and at its end we place the beginning of the walk that goes right on the grid $\mathbb{Z} \times\{-1, 0\}$. Hence, we transformed our irreducible bridge without a tail into a walk with two more steps on the grid $\mathbb{Z} \times\{1, 2\}$, as we know the lines on which it starts and ends. (We show an example on figures 5 and 6.) Notice that we add right steps because otherwise our transformed walk may not be a SAW. Moreover, every irreducible bridge $O_O$ without a tail has a unique transformed walk. However, the inverse is not true.

 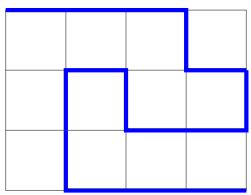
\begin{figure}[h!]
\begin{tikzpicture}[scale=0.8]
\draw[step=1cm,gray,very thin] (-2,-2) grid (2,1);
\draw[blue,ultra thick] (-2,1) -- (1,1) -- (1,0) -- (2,0);
\draw[blue,ultra thick] (2,0) -- (2,-1) -- (1,-1) -- (0,-1) -- (0,0) -- (-1,0) -- (-1,-1) ;
 \draw[blue,ultra thick] (-1,-1) -- (-1,-2) -- (2,-2);
\end{tikzpicture}

\caption{$O_O$ \textit{irreducible bridge} without a \textit{tail}.}
\end{figure}

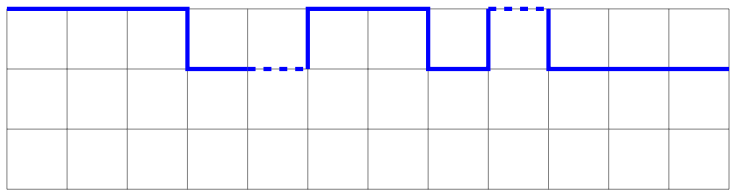
\begin{figure}[h!]
\begin{tikzpicture}[scale=0.8]
\draw[step=1cm,gray,very thin] (-2,-2) grid (10,1);
\draw[blue,ultra thick]  
(-2,1) -- (1,1) -- (1,0) -- (2,0);
\draw[blue,ultra thick,dashed](2,0) -- (3,0);
\draw[blue,ultra thick](3,0) -- (3,1) -- (5,1) -- (5,0) -- (6,0) -- (6,1);
\draw[blue,ultra thick,dashed](6,1) -- (7,1);
\draw[blue,ultra thick](7,1) -- (7,0) -- (10,0);
\end{tikzpicture}

\caption{The bridge from Fig. 5 after the transformation.}
\end{figure}

We now find the generating function of the transformed SAWs. Such a transformed SAW cannot make left steps, it is only on the grid $\mathbb{Z} \times$\{$1, 2$\} , its starting and ending points are on different lines (beginning on the outer line and ending on the inner one) and the first step is always to the right.

The generating function equals:

$$ \frac{\frac{t^2}{1-t}}{1-\frac{t^4}{(1-t)^2}} \frac{1}{1-t}=\frac{t^2}{1-2t+t^2-t^4}.$$

If we divide the above generating function by $t^2$ (because of the added 2 right steps) we will obtain a generating function whose coefficient in front of the $n^{th}$ power is bigger than or equal to the number of  irreducible bridges of type $O_O$ without a tail of length $n$.

$$\frac{1}{1-2t+t^2-t^4}=1 + 2 t + 3 t^2 + 4 t^3 + 6 t^4 + 10 t^5 \dots$$

However, as we want to make the coefficients more accurate, we will decrease the coefficients in front of the first 14 powers.  (We check by exhaustive search that these coefficients can be decreased.) 

\begin{small}
\begin{eqnarray*}
\frac{1}{1-2t+t^2-t^4} & - & (1 + 2 t + 3 t^2 + 4 t^3 + 5 t^4 + 10 t^5 + 17 t^6 + 24 t^7 + 
  45 t^8 + 72 t^9 + 
  \\
 & & {}+109 t^{10} +188 t^{11} +301 t^{12} + 474 t^{13})=\end{eqnarray*}\end{small}
  $$= \tfrac{t^4-2t^5 + t^6+4 t^7-9t^8+4t^9 + 7 t^{10} - 18 t^{11} + 
 11 t^{12} + 12 t^{13} + 756 t^{14} - 286 t^{15} + 301 t^{16} + 474 t^{17}}{1-2t+t^2-t^4}.$$

 Moreover, we notice that we also counted the simple irreducible bridges in the generating function. (We did not decrease the coefficient in front of the fourth power with $6$.) Now we can add the tail by multiplying by $\frac{1}{1-t}$. Hence we will obtain a generating function whose coefficient in front of the $n^{th}$ power is bigger than or equal to the number of irreducible bridges of type $O_O$ with length $n$.
 
 The other types of irreducible bridges will be considered in the appendix A.
 
 In the end, our calculations yield one ``big'' generating function. We again look at the ``important'' part of the code of all bridges:
  $$[I_OO_O^*O_II_I^*]^* $$
The denominator of the generating function of this part of the code is 
\begin{eqnarray*}
D(t) & = & 1 - 12 t + 65 t^2 - 209 t^3 + 434 t^4 - 568 t^5 + 338 t^6 + 305 t^7 - 
 907 t^8 + \\
 & & {}  +770 t^9+ 292 t^{10} - 1462 t^{11} + 1406 t^{12}+ 446 t^{13} - 
 3945 t^{14} + 13408 t^{15}-\\
 & & {}  - 42903 t^{16} + 101573 t^{17} - 158117 t^{18} + 
 136952 t^{19} + 4507 t^{20}-\\ & & {}- 182921 t^{21} + 225943 t^{22} - 49787 t^{23} - 
 215357 t^{24}
 + 317489 t^{25}- \\
 & & {} - 108470 t^{26} - 314100 t^{27} + 
 801774 t^{28}- 1620468 t^{29} + 3204285 t^{30}-\\
 & & {}  - 4939210 t^{31} + 
 4697564 t^{32}  - 1024682 t^{33} - 3939143 t^{34} + 5903640 t^{35}-\\
 & & {}  - 
 3220560 t^{36} - 980952 t^{37}+ 2685716 t^{38} - 1510904 t^{39} - 
 162295 t^{40}+ \\& & {}+ 605850 t^{41}- 239118 t^{42} - 42432 t^{43} + 55764 t^{44}.  \\
\end{eqnarray*}

 The radius of convergence is approximately $0.461722$. 
 \begin{corollary}
 The following upper bound for the connective constant is true:
 $$\mu_{[-1,2]}\leq 2.166.$$
 \end{corollary}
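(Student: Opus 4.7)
The plan is to refine the bridge decomposition from Proposition 3.1 into a genuine generating-function upper bound, then apply Pringsheim/Cauchy-Hadamard to the denominator. The guiding principle is that in the code $[{I_I}^* I_O {O_O}^* O_I]^* {I_I}^* \widetilde{I_O{ O_O}^*} r^*$ every concatenation of pieces produces a distinct bridge, so if $\widehat{G}_{XY}(t)$ is a power series with nonnegative coefficients that term-by-term dominates the true generating function of $X_Y$ irreducible bridges, then plugging the $\widehat{G}_{XY}$ into Proposition~3.1 yields a series that termwise dominates $\sum b_{[-1,2]n}t^n$. The $n$-th root of any such dominating series' coefficients converges to the reciprocal of its radius of convergence, which by (2) is an upper bound on $\mu_{[-1,2]}$.

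First I would formalize the simple/complicated dichotomy announced after Figure~3: every tail-free irreducible bridge either makes no left step at all (simple) or contains at least one left step (complicated), and in the latter case an exhaustive case analysis on the first vertical excursion shows that the walk can be uniquely cut into three monotone sub-walks living on the strips $\mathbb{Z}\times\{-1,0\}$, $\mathbb{Z}\times\{0,1\}$, $\mathbb{Z}\times\{1,2\}$ going right–left–right. For each type $XY\in\{O_O,O_I,I_O,I_I\}$ I would carry out the ``reflect and glue'' transformation illustrated in Figures 5–6: leave the rightmost sub-walk in place, append a right step, rotate the middle sub-walk by $180^\circ$ about its now-left endpoint so that it becomes right-going, append another right step, then attach the left sub-walk. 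This produces a right-going SAW on a two-line strip whose start/end rows are determined by $XY$, and distinct tail-free bridges produce distinct transformed walks (the two inserted right steps are recoverable as the unique ``bottleneck'' columns). Counting these right-going strip walks is elementary via the geometric series $t^2/(1-t)$ for a straight segment and $t^4/(1-t)^2$ for each up-over-down detour.

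Next I would, for each $XY$, divide the transformed generating function by $t^2$ (the two inserted right steps), subtract the monomials corresponding to low-order coefficients where exhaustive enumeration shows strict inequality, then multiply by $1/(1-t)$ to reinstate the tail. This yields the four dominating series $\widehat{G}_{OO},\widehat{G}_{OI},\widehat{G}_{IO},\widehat{G}_{II}$. Substituting into the ``central'' part $[I_O O_O^* O_I I_I^*]^*$ of the bridge code, I obtain a rational function with denominator
\[
D(t)=1-\widehat{G}_{IO}\cdot\frac{1}{1-\widehat{G}_{OO}}\cdot\widehat{G}_{OI}\cdot\frac{1}{1-\widehat{G}_{II}},
\]
cleared to a polynomial, and this is exactly the $D(t)$ displayed in the statement. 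Numerically locating its smallest-modulus root gives $\approx 0.461722$; since the full bridge generating function is dominated termwise by (rational factor times) $1/D(t)$ and has nonnegative coefficients, Pringsheim's theorem places the singularity on the positive real axis, and the upper bound $\mu_{[-1,2]}\le 1/0.461722<2.166$ follows from $b_{[-1,2]n}^{1/n}\to\mu_{[-1,2]}$ (bonus of (2)).

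The main obstacle is verifying that the transformation is truly injective across all four types simultaneously: one must check that the rotated middle piece cannot accidentally coincide with an extension of the right piece, so that the two inserted right steps remain uniquely identifiable. A second, more tedious obstacle is the hand-verification of the low-order coefficient corrections for each type in appendix A; the numerics of $D(t)$ are sensitive to these, and forgetting to include one of the ``bonus'' contributions (such as the simple irreducible bridges already captured by the bare geometric series) would spoil either the inequality or the sharpness of the bound.
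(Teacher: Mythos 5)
Your proposal reproduces the paper's argument essentially verbatim: the simple/complicated dichotomy, the reflect-and-glue transformation with two inserted right steps mapping tail-free irreducible bridges injectively (but not surjectively) into right-going SAWs on a width-one strip, the division by $t^2$, the exhaustive low-order coefficient corrections, the reinstatement of the tail via $1/(1-t)$, and the substitution into $[I_O O_O^* O_I I_I^*]^*$ to obtain the displayed $D(t)$ with smallest-modulus root $\approx 0.461722$. This is the same approach as the paper, including your correct identification of its two delicate points (injectivity of the transformation and the hand-checked coefficient reductions).
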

 \section{Acknowledgements}
The second author is partially supported  by the High School Student Institute at the Bulgarian Academy of Science and Club Young Scientists, Bulgaria.

\newpage

\appendix

\section{Upper bound for $\mu_{[-1,2]}$}
We consider the other 3 types of irreducible bridges. We use the same method to make an injective transformation that maps the irreducible bridges to SAWs on a lattice strip of width one. Then, after finding the generating function of the codomain, we decrease some of the coefficients. The calculations are in table 3.
\def\arraystretch{1.4}

\begin{tabular}{|c|c|c|c|c|}\hline
~&$O_O$ & $I_O$ & $O_I$& $I_I$\\ \hline
    Transformed SAWs&$\frac{t^2}{1-2t+t^2-t^4}$& $\frac{(1-t)t}{1-2t+t^2-t^4}$ &$\frac{(1-t)t}{1-2t+t^2-t^4}$&$\frac{t^2}{1 - 2 t + t^2 - t^4}$ \\[3pt ] 
     \hline
     Extracting the added steps&$\frac{1}{1-2t+t^2-t^4}$& $\frac{(1-t)}{1-2t+t^2-t^4}$ &$\frac{(1-t)}{1-2t+t^2-t^4}$&$\frac{t^2}{1 - 2 t + t^2 - t^4}$\\[3pt ] 
     \hline
\parbox[t]{2mm}{\multirow{21}{*}{\rotatebox[origin=c]{90}{Decreasing the  coefficients with \dots~  ~~~  ~}}}  &\parbox[t]{2mm}{\multirow{11}{*}{\rotatebox[origin=c]{90}{
\begin{small}
$1 + 2t + 3t^2 + 4t^3 + 5t^4 + 10t^5+17t^6+24t^7+45t^8+72t^9+109t^{10}+188t^{11}+301t^{12}+474t^{13}$
~~\end{small}}}} &
 \parbox[t]{2mm}{\multirow{11}{*}{\rotatebox[origin=c]{90}{$1 + t + 2 t^4 + 4 t^5 + 6 t^6 + 11 t^7 + 16 t^8 + 26 t^9 + 44 t^{10} + 
 67 t^{11} + 115 t^{12} + 180 t^{13}$~~} }} &
\parbox[t]{5mm}{\multirow{11}{*}{\rotatebox[origin=c]{90}{$1 + t + 2 t^4 + 4 t^5 + 6 t^6 + 11 t^7 + 16 t^8 + 26 t^9 + 44 t^{10} + 
 67 t^{11} + 115 t^{12} + 180 t^{13}$~~} }}& 
 \parbox[t]{5mm}{\multirow{11}{*}{\rotatebox[origin=c]{90}{$2 t^3 + 3 t^4 + 4 t^5 + 6 t^6 + 10 t^7 + 17 t^8 + 28 t^9 + 45 t^{10} + 
 72 t^{11} + 115 t^{12} + 186 t^{13}~~$ }}}
 \\[390pt] \hline
\end{tabular}
\begin{center}
Table 3.    
\end{center}

 \def\arraystretch{3}
 \begin{center}
\begin{tabular}{|c|c|}\hline
~&Final result\\ [0 pt] \hline
  $O_O$ & $\frac{t^4 - 2 t^5 + t^6 + 4 t^7 - 9 t^8 + 4 t^9 + 7 t^{10} - 18 t^{11} + 
   11 t^{12} + 12 t^{13} + 756 t^{14} - 286 t^{15} + 301 t^{16} + 
   474 t^{17}}{(1 - 2 t + t^2 - t^4) (1 - t)}$\\ [2 pt] \hline
 $I_O$ & $\frac{t^2 - t^3 - t^4 + t^5 - 3 t^7 + 2 t^8 - t^9 - 2 t^{10} + 6 t^{11} - 9 t^{12} + 9 t^{13} + 289 t^{14} - 113 t^{15} + 115 t^{16} + 
   180 t^{17}}{(1 - 2 t + t^2 - t^4) (1 - t)}$\\ [2 pt] \hline
$O_I$ & $\frac{t^2 - t^3 - t^4 + t^5 - 3 t^7 + 2 t^8 - t^9 - 2 t^{10} + 6 t^{11} - 9 t^{12} + 9 t^{13} + 289 t^{14} - 113 t^{15} + 115 t^{16} + 
   180 t^{17}}{(1 - 2 t + t^2 - t^4) (1 - t)}$ \\ [2 pt] \hline 
 $I_I$ & $\frac{t^2 - 2 t^3 + t^4 - t^6 + t^{12} + 302 t^{14} - 114 t^{15} + 115 t^{16} + 
   186 t^{17}}{(1 - 2 t + t^2 - t^4) (1 - t)}$
 \\ [2 pt] \hline
\end{tabular}
\end{center}
\begin{center}
    Table 4.
\end{center}
Next we explain the second and third row of table 3 in the following paragraphs:

\subsection{$O_I$ irreducible bridges}
We apply the same procedure as for the $O_O$ case. We examine the complicated bridges first (let us assume that we are on the higher outer line). The difference is that we do not need to add a right step after the walk that goes right on the grid $\mathbb{Z} \times$\{$1, 2$\}, because the irreducible bridge finishes on an inner line, and therefore, the first step of the walk that goes left on the grid $\mathbb{Z} \times$\{$0, 1$\} is a step to the left. Thus, after the rotation by $180$ degrees we will still have a SAW. (The beginning of the rotated walk is not moved.) Hence, after the transformation we have a SAW on the grid $\mathbb{Z} \times$\{$1, 2$\}, which does not have left steps, its starting and ending points are on the same line, its first step is to the right and its length is with $1$ bigger than the length of the walk it was obtained from. 

\subsection{$I_O$ irreducible bridges}
We see that the number of irreducible bridges of types $O_I$ and $I_O$ are equal (Fig. 7 and 8). Hence, we can use the same reasoning.

\begin{figure}[h!]
\begin{tikzpicture}[scale=0.8]
\draw[step=1cm,gray,very thin] (-2,-2) grid (8,1);
\draw[blue,ultra thick,dashed] (-2,1) -- (1,1);
\draw[blue,ultra thick] (1,1) -- (8,1) -- (8,0) -- (6,0) -- (6,-1) -- (4,-1) -- (4,0) -- (1,0) -- (1,-2) -- (2,-2) -- (2,-1) -- (3,-1) -- (3,-2) -- (8,-2) -- (8,-1) ; 

\end{tikzpicture}

\caption{An irreducible bridge of type  $O_I$ with a tail of length $2$. }
\end{figure}

\begin{figure}[h!]
\begin{tikzpicture}[scale=0.8]
\draw[step=1cm,gray,very thin] (-1,-2) grid (9,1);
\draw[blue,ultra thick](-1,1) -- (6,1) -- (6,0) -- (4,0) -- (4,-1) -- (2,-1) -- (2,0) -- (-1,0) -- (-1,-2) -- (0,-2) -- (0,-1) -- (1,-1) -- (1,-2) -- (6,-2) -- (6,-1);
\draw[blue,ultra thick,dashed] (6,-1) -- (9,-1) ; 

\end{tikzpicture}

\caption{Fig. 7 after the transformation to an irreducible bridge of type $I_O$, that moves in the ``opposite'' direction.  }
\end{figure}
\subsection{$I_I$ irreducible bridges}
 Let us assume that the walk starts from the higher inner line. We again perform the same operation with the difference that we do not need to add any right steps because we start and finish the bridge on inner lines and as a result, after the transformation we have a SAW on the grid $\mathbb{Z} \times$\{$1, 2$\} with the same length.

  \section{Lower and upper bounds for the number of SAWs on the $\mathbb{Z}\times\{-1,0,1\}$ and $\mathbb{Z}\times\{-1,0,1,2\}$ lattices.}
  We use the Hammersley Welsh-Method which was used to find bounds for the number of SAWs on the $\mathbb{Z}\times\mathbb{Z}$ lattice (see [MS] p. 57).
  \subsection{Lower bound}
 We introduce Fekete's lemma.
 \begin{lemma}
For every subadditive set $\{a_n\}_{n=1}^{\infty}$, the bound $\lim_{n\rightarrow \infty}\cfrac{a_n}{n}$ exists and is equal to $\inf \cfrac{a_n}{n}$.
\end{lemma}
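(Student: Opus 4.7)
The plan is to set $L = \inf_{n \geq 1} a_n/n$ (allowing $L = -\infty$) and to sandwich the sequence: show $\liminf_{n \to \infty} a_n/n \geq L$, which is immediate since every $a_n/n \geq L$ by definition of the infimum, and $\limsup_{n \to \infty} a_n/n \leq L$, which is the substantive direction.

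For the upper bound I would fix $m \geq 1$ and use the division algorithm: for $n \geq m$, write $n = qm + r$ with $q \geq 1$ and $0 \leq r < m$. Iterating subadditivity gives $a_n \leq q\,a_m + a_r$ when $r \geq 1$, and $a_n \leq q\,a_m$ when $r = 0$. Dividing by $n$,
\[
\frac{a_n}{n} \;\leq\; \frac{q}{n}\,a_m \;+\; \frac{C_m}{n},
\]
where $C_m = \max\{a_1, \dots, a_{m-1}\}$ is a finite constant depending only on $m$ (the remainder ranges over a fixed finite set, so this maximum exists). Since $q/n \to 1/m$ and $C_m/n \to 0$ as $n \to \infty$, I conclude $\limsup_{n \to \infty} a_n/n \leq a_m/m$. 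Because $m$ was arbitrary, taking the infimum over $m$ yields $\limsup_{n \to \infty} a_n/n \leq L$, and combined with the trivial lower bound this forces $\lim_{n \to \infty} a_n/n = L$.

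The only technical wrinkle is the case $L = -\infty$, where I cannot literally take an infimum of achievable values. Instead, for any prescribed threshold $-M$ I would pick $m$ with $a_m/m < -M$ (such $m$ exists since the infimum is $-\infty$) and run the same division-algorithm argument to obtain $\limsup_{n \to \infty} a_n/n \leq -M$; letting $M \to \infty$ then gives $a_n/n \to -\infty = L$. I do not anticipate a real obstacle here: the proof is short and rests entirely on the division algorithm together with the definition of infimum, with the only care needed being the boundary book-keeping of the residue $r$ and the $L = -\infty$ case just described.
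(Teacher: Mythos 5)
Your proof is correct: it is the standard division-algorithm proof of Fekete's lemma, and the only point worth polishing is that $C_m$ should be taken as $\max\{0, a_1,\dots,a_{m-1}\}$ so that the displayed inequality also covers the case $r=0$ (this is harmless since $C_m/n\to 0$ regardless), together with the $L=-\infty$ case you already handle. Note that the paper itself offers no proof of this lemma --- it simply states the classical result and applies it --- so your argument supplies a justification the paper omits rather than diverging from one it contains.
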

Let $y$ be equal to $1$ or $2$. Then  
$$c_{[-1,y]n+m} \leq c_{[-1,y]n}c_{[-1,y]m}$$
$$\log c_{[-1,y]n+m} \leq \log c_{[-1,y]n}+\log c_{[-1,y]m}.$$
Therefore,
$$\inf \frac{\log c_{[-1,y]n}}{n}=\lim_{n\rightarrow \infty}\frac{\log c_{[-1,y]n}}{n}$$
$$\log \mu_{[ -1, y]}=\inf \frac{\log c_{[-1,y]n}}{n}$$
$$\mu^n_{[ -1,y]}\leq c_{[-1,y]n} $$for $n\geq1$. 
\subsection{Upper bound}
We will prove (1) (section 2)  for the $\mathbb{Z}\times\{-1,0,1\}$ lattice, however, one can prove (2) (section 3) for the $\mathbb{Z}\times\{-1,0,1,2\}$ lattice in the same way. 
\subsubsection{$\mathbb{Z}\times\{-1,0,1\}$ lattice}
Before going into details, we need several definitions and one lemma.

\begin{definition}
 An $n$-step \textit{half-space walk} (all points after the first one are on the right of the line, parallel to the $x$-axis passing through it) is a SAW, whose $x$-axis coordinates of the points satisfy the following inequality:

$$x_0<x_i $$ for all $ i=1, 2,\dots, n.$

The number of $n$-step half-space walks is denoted with $h_{[-1,1]n}$. By convention, $h_{[-1,1]0}=1$.
\end{definition}

In particular, every bridge is a half-space walk. 

\begin{definition}
\textit{The span} of a $n$-step SAW is the difference between the smallest and biggest $x$-axis coordinate of points, which are part of the walk:
$$\max\limits_{0 \leq i \leq n}x_i-\min\limits_{0 \leq i \leq n}x_i.$$
The number of $n$-step half-space walks (respectively bridges) starting at the origin $(0,0)$ and having span $A$ is denoted by $h_{[-1,1]n,A}$ (respectively $b_{[-1,1]n,A}$). 

\end{definition}

 Note that $h_{[-1,1]n,0}$ is $1$ if $n=0$, and it is $0$ otherwise.
 
 \begin{lemma}
For each integer $A >0 $, let $P_F(A)$ denote the number of partitions of $A$ into distinct integers, whose number is less than $4$ (i.e the number of ways to write $A= A_1+\dots+A_k$, where $A_1>\dots>A_k>0$ and $k\leq 3$). Then 
$$P_F(A) \leq 1+A+A^2.$$

By assumption we let $P_F(0)=1$ since $1 \leq 1 + 0 +0$.
  \end{lemma}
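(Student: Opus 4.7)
The plan is to prove the bound by splitting $P_F(A)$ according to the number of parts $k\in\{1,2,3\}$ and bounding each contribution separately. Write
$$P_F(A) = P_1(A) + P_2(A) + P_3(A),$$
where $P_k(A)$ counts the strict partitions of $A$ into exactly $k$ positive parts. Then it suffices to show $P_1(A)\leq 1$, $P_2(A)\leq A$ and $P_3(A)\leq A^2$.

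First I would dispose of $P_1(A)$: the only $1$-part strict partition of $A$ is $A$ itself, so $P_1(A)\leq 1$ (with $P_1(0)=0$ since we require positive parts; the convention $P_F(0)=1$ is handled separately by the empty partition, which is noted in the statement). Next, for $P_2(A)$, a partition $A=A_1+A_2$ with $A_1>A_2>0$ is uniquely determined by $A_2$, and $A_2$ must satisfy $1\leq A_2\leq \lfloor(A-1)/2\rfloor$, so $P_2(A)\leq \lfloor(A-1)/2\rfloor\leq A$. Finally, for $P_3(A)$, a partition $A=A_1+A_2+A_3$ with $A_1>A_2>A_3>0$ is uniquely determined by the pair $(A_2,A_3)$, since then $A_1 = A-A_2-A_3$. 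As both $A_2$ and $A_3$ lie in $\{1,2,\dots,A\}$, the crude bound $P_3(A)\leq A\cdot A = A^2$ follows.

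Adding the three estimates gives
$$P_F(A) \leq 1 + A + A^2,$$
as required. The base case $A=0$ is verified directly by the convention $P_F(0)=1\leq 1+0+0^2$ noted in the statement.

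There is no real obstacle here: the whole argument is an elementary counting estimate, and the bound is deliberately loose so that we can invoke it cleanly in the subsequent Hammersley--Welsh computation. The only thing to be careful about is the convention for the empty partition at $A=0$; once that is handled, the three-way split is immediate.
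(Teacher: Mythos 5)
Your proof is correct and follows essentially the same approach as the paper: split $P_F(A)$ by the number of parts $k\in\{1,2,3\}$ and bound each case by observing that the partition is determined by a choice of at most $k-1$ of its parts, each ranging over at most $A$ values. The only cosmetic difference is that you parametrize by the smaller parts $(A_2,A_3)$ while the paper uses the larger ones, which changes nothing.
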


\begin{proof}
If $k=1$, then we have one way. If  $k=2$, then we have at most $A$ ways, as for every choice for $A_1$, we have at most one possibility for  $A_2$. The next case is analogical, as for $k=3$ we count all possibilities for $A_1$ and $A_2$, and we get at most $A^2$ different sums.
\end{proof}
  
The following proposition contains the first part of the proof of our upper bound. 
 
 \begin{proposition}
For every $n\geq0$,
$$h_{[-1,1]n} \leq P_F(n)b_{[-1,1]n}.$$
 \end{proposition}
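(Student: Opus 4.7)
The plan is to apply the Hammersley--Welsh bridge decomposition to a half-space walk $\omega$ of length $n$. Peel off bridges one at a time: let $i_1$ realize $\max_i x_i$; the prefix $\omega[0{:}i_1]$ is a bridge $\omega_1$ of span $A_1 = \max_i x_i$. On the suffix, reverse and repeat to extract a bridge $\omega_2$ whose span equals $x_{i_1}$ minus the new minimum; iterating produces bridges $\omega_1, \omega_2, \ldots, \omega_k$ with strictly decreasing spans $A_1 > A_2 > \cdots > A_k > 0$. The strict decrease comes from the fact that each new extremum sits strictly between the previous pair of values (using the half-space condition and that earlier extrema are not revisited).

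The first key step is the bound $k \leq 3$, which is where the narrowness of the strip $\mathbb{Z}\times\{-1,0,1\}$ enters. Each turn point $P_j$ between $\omega_j$ and $\omega_{j+1}$ is a strict $x$-extremum that $\omega$ never revisits, so the step immediately following $P_j$ is forced to be horizontal (in the unique direction escaping the extremum). A local analysis of two consecutive pieces then shows that the $y$-coordinate of $P_j$ cannot coincide with that of $P_{j+1}$ without creating a self-intersection or forcing a vertex outside the strip; iterating, the $y$-coordinates $y_1,\ldots,y_k$ are pairwise distinct, and since only three rows are available one concludes $k\leq 3$.

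The second step is to assemble the pieces $\omega_1,\ldots,\omega_k$ into a single bridge $\tilde\omega$ of length $n$ and span $A = A_1+\cdots+A_k$. Reflect every even-indexed $\omega_i$ about a vertical axis so that all pieces run left-to-right, then translate them to sit end-to-end horizontally. Successive pieces occupy disjoint $x$-intervals and meet only at the shared endpoint, so $\tilde\omega$ is a self-avoiding walk, and its bridge property follows from the bridge property of each $\omega_i$.

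The map $\omega \mapsto \bigl((A_1,\ldots,A_k),\tilde\omega\bigr)$ is then injective: given the span sequence the cut points inside $\tilde\omega$ are the partial sums $A_1,\, A_1+A_2,\,\ldots$, and undoing the reflections recovers $\omega$ piece by piece. The admissible span sequences summing to $A$ are exactly the partitions of $A$ into at most three distinct positive parts, so $h_{[-1,1]n,A} \leq P_F(A)\, b_{[-1,1]n,A}$. Summing over $A\leq n$ and using that $P_F$ is nondecreasing (add $1$ to the largest part to embed partitions of $A$ into partitions of $A+1$) gives $h_{[-1,1]n} \leq P_F(n)\, b_{[-1,1]n}$. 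The main obstacle is the strip-specific bound $k\leq 3$; once that combinatorial fact is in hand, the rest is the standard Hammersley--Welsh packaging.
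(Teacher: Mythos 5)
Your overall strategy is exactly the paper's: the Hammersley--Welsh decomposition of a half-space walk into sub-bridges of strictly decreasing spans $A_1>\cdots>A_k>0$, the unfolding by reflections, injectivity for a fixed span sequence, and the count of span sequences by $P_F$. All of that is sound and matches the paper. The gap is in your justification of the one strip-specific step, $k\leq 3$. The lemma you rely on --- that the turn points $P_j=w_{n_j}$ have pairwise distinct $y$-coordinates (or even that consecutive ones do) --- is false. Consider the $14$-step half-space walk
$$(0,0),(1,0),(1,-1),(2,-1),(3,-1),(4,-1),(5,-1),(5,0),(5,1),(4,1),(4,0),(3,0),(2,0),(2,1),(3,1)$$
in $\mathbb{Z}\times\{-1,0,1\}$. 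It is self-avoiding, its decomposition has $k=3$ with $A_1=5$, $A_2=3$, $A_3=1$, and the turn points are $w_{n_1}=(5,1)$, $w_{n_2}=(2,1)$, $w_{n_3}=(3,1)$: all three lie on the row $y=1$. What your local analysis misses is that a piece may arrive at its terminal extremum by a \emph{vertical} step (here $(2,0)\to(2,1)$), so the cell immediately ahead of $P_{j+1}$ in the escaping direction need not be occupied even when $P_j$ and $P_{j+1}$ share a row. A second, independent problem is that even if consecutive turn points did lie on distinct rows, pairwise distinctness would not follow (the rows could alternate $1,0,1,0,\ldots$ indefinitely), so $k\leq 3$ would still not be established.

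The bound $k\leq3$ is true, but for the reason the paper gives: a confinement argument. The first sub-bridge crosses every column $1\leq x\leq A_1$, and the remainder of the walk is trapped in $1\leq x<A_1$ on one side of it, so at least one of the three horizontal lines is entirely blocked for the rest of the walk; each further turnaround removes at least one more accessible line, and a walk confined to a single line cannot turn around, whence at most $3$ pieces. Replace your distinct-rows lemma with an argument of this type; the unfolding, injectivity, and counting portions of your write-up can stand as written.
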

 
 \begin{proof} 
 
 Let us have an $n$-step half-space walk denoted by  $w$ that starts at the origin  $(0,0)$. Let $n_0=0$. For each $i=1,2,\dots,$ respectively define $A_j(w)$ and $n_j(w)$ so that
 
 $$ A_i=\max\limits_{n_{i-1} \leq j \leq n}(-1)^i(x_{n_{i-1}}-x_j)$$

and $n_i$ is the largest value of $j$, for which this maximum is attained. The recursion is stopped at the smallest integer $k$ such that  $n_k=n$; this means that $A_{k+1}(w)$ and $n_{k+1}(w)$ are not defined. Observe that $A_1(w)$ is the span of $w$; in general $A_{i+1}$ is the span of the SAW $(w_{n_i},\dots, w_n)$ ($w_i$ is the $i+1$ point of the walk  $w$), which is either a half-space walk or the reflection of one. Moreover, each of the subwalks $(w_{n_i},\dots, w_{n_{i+1}} )$ is either a bridge or the reflection of one. Also observe that $A_1>A_2>\dots>A_k>0$ and that as we are working on the grid $\mathbb{Z} \times$\{$-1, 0, 1,$\}, then $k\leq 3$, because when we reach $w_{n_1}$, the rest of the walk cannot have points on at least one of the lines.  This follows from the fact that the last rightmost point is higher or lower (let us assume that it is lower) from the first rightmost point, and all other points need to have a smaller $x$-axis coordinate. As a result, since we have a half-space walk, all the points need to be below the walk with ends $(w_{n_0},w_{n_1})$. From here we can see that the statement is true.  As the same process continues, we find that $k \leq 3$.  

\begin{figure}[h!]
\begin{tikzpicture}[scale=0.8]

\node[draw] at (-3,1) {$w_0$};
\node[draw] at (3,-1) {$w_{n_1}$};
\node[draw] at (-2,-2) {$w_{n_2}$};
\node[draw] at (2,-2) {$w_{n_3}$};
\draw[blue,ultra thick] (-2,1) -- (1,1) -- (1,0) -- (2,0) -- (2,-1);
\draw[blue,ultra thick,dashed](2,-1) -- (0,-1) -- (0,0) -- (-1,0) -- (-1,-2);
\draw[blue,ultra thick,dotted] (-1,-2) -- (1,-2) ;
 
\end{tikzpicture}

\caption{A half-space walk $w$ in $H_{14}[4,3,2]$. }
\end{figure}
 
 \begin{figure}[h!]
\begin{tikzpicture}[scale=0.8]

\draw[blue,ultra thick] (-2,1) -- (1,1) -- (1,0) -- (2,0) -- (2,-1);
\draw[blue,ultra thick,dashed](2,-1) -- (4,-1) -- (4,0) -- (5,0) -- (5,-2);
\draw[blue,ultra thick,dotted] (5,-2) -- (3,-2) ;
 
\end{tikzpicture}

\caption{The transformed walk $w\textprime$ in $H_{14}[7,2]$.}
\end{figure}

For every decreasing sequence of $k$ positive integers $a_1>a_2>\dots>a_k>0,$ let $H_n[a_1,a_2,\dots,a_k]$ be the set of $n$-step half-space walks $w$ with  $w_0=(0,0)$ and $A_1(w)=a_1$, \dots, $A_k(w)=a_k$ and $n_k(w)=n$. Note that in particular $H_n[a]$ is the set of $n$-step bridges of span $a$.
 
Given an $n$-step half-space walk $w$, define a new $n$-step walk $w\textprime$ as follows: for $0 \leq j \leq n_1(w)$, define $w\textprime_j=w_j$; and for $n_1(w)<j \leq n$, define $w\textprime_j$ to be the reflection of the point $w_j$ in the hyperplane \textit{${x_1}$}$=A_1(w)$. Observe that if $w$ is in $H_n[a_1, a_2, \dots, a_k]$, then $w\textprime$ is in $H_n[a_1+a_1, a_3, \dots, a_k]$; moreover, this transformation maps an unique walk (the transformation is one-to-one), so

  $$|H_n[a_1, a_2, \dots, a_k]|\leq|H_n[a_1+a_2,a_3,\dots,a_k]|.$$
  
  Therefore, summing over all possible sequences $a_1>\dots>a_k>0$, we get that
  
  $$h_{[-1,1]n}=\sum|H_n[a_1,\dots, a_k]| \leq \sum|H_n[a_1+\dots+a_k]|=$$
  $$=\sum b_{[-1,1]n,a_1+\dots+a_k}$$  
which tells us that 
  
  $$h_{[-1,1]n} \leq \sum P_F(A)b_{[-1,1]n,A}.$$
  
  Note that $P_F(A)\leq P_F(n)$ for $A \leq n$. Hence,
  
  $$h_{[-1,1]n} \leq P_F(n) \sum_{A=1}^{n} b_{[-1,1]n,A}=P_F(n)b_{[-1,1]n}$$
  
 which proves the proposition.
   \end{proof}

 Therefore, we are ready to prove our upper bound.

  \begin{proof}
Given an arbitrary $n$-step SAW $w$, let $M=\min x_j$ and $m$ be the largest $j$ such that $x_j=M$. Then $(w_m,... w_n)$ is a half-space walk, as is
$$(w(m)-(1,0),w(m),w(m-1),\dots,w(0))$$

Using this decomposition method, as well as proposition A.10 and the inequality $b_{[-1,1]i}b_{[-1,1]j} \leq b_{[-1,1]i+j}$ (follows from the fact that whenever we concatenate two bridges we get a new bridge) we obtain:

$$c_{[-1,1]n} \leq \sum_{m=0}^{n} h_{[-1,1]n-m}h_{[-1,1]m+1} \leq \sum_{m=0}^{n} b_{[-1,1]n-m}b_{[-1,1]m+1}P_F(m+1)P_F(n-m) $$

$$\leq b_{[-1,1]n+1}\sum_{m=0}^{n} (1+(m+1)+(m+1)^2)(1+(n-m)+(n-m)^2)$$

$$\leq b_{[-1,1]n+1}((n+1)+2(n+1)^2+3(n+1)^3+2(n+1)^4+(n+1)^5) $$

for all $n$. Therefore, since $b_{[-1,1];n+1} \leq \mu^{n+1}_{[-1,1]}$ (the proof of this fact, as for our lower bound for the number of SAWs, uses Fekete's lemma, when concerning superadditive sequences), we have 
$$c_{[-1,1]n}\leq \mu^{n+1}_{[-1,1]}((n+1)+2(n+1)^2+3(n+1)^3+2(n+1)^4+(n+1)^5).$$\end{proof}
\begin{corollary}

For all $n \geq 2$ we have that:
$$\frac{\mu_{[-1,1]}^{n-1}}{(n+1)+2(n+1)^2+3(n+1)^3+2(n+1)^4+(n+1)^5}  \leq b_{[-1,1]n} \leq \mu_{[-1,1]}^{n}.$$

\end{corollary}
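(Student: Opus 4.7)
The plan is to derive each of the two inequalities independently, in both cases recycling machinery already set up in the paper.

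For the upper bound $b_{[-1,1]n}\leq \mu_{[-1,1]}^{n}$, I would appeal to the superadditive version of Fekete's lemma applied to the sequence $\log b_{[-1,1]n}$. Concatenating an $n$-step bridge with an $m$-step bridge — simply attaching the start of the second to the endpoint of the first — always produces a bridge of length $n+m$, so $b_{[-1,1]n+m}\geq b_{[-1,1]n}\,b_{[-1,1]m}$. Hence $\log b_{[-1,1]n}$ is superadditive, and Fekete's lemma gives $\lim_{n\to\infty}\tfrac{\log b_{[-1,1]n}}{n}=\sup_n\tfrac{\log b_{[-1,1]n}}{n}$. Since the limit equals $\log\mu_{[-1,1]}$ (the "bonus" fact noted after inequality (1) and proven in appendix B), each term is dominated by the supremum, i.e.\ $\log b_{[-1,1]n}\leq n\log\mu_{[-1,1]}$, which is the desired inequality.

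For the lower bound, I would reuse the chain of inequalities from the Hammersley-Welsh argument above. That argument established
\[
c_{[-1,1]n}\leq b_{[-1,1]n+1}\bigl((n+1)+2(n+1)^2+3(n+1)^3+2(n+1)^4+(n+1)^5\bigr).
\]
Shifting $n\mapsto n-1$ and solving for $b_{[-1,1]n}$ yields
\[
b_{[-1,1]n}\geq \frac{c_{[-1,1]n-1}}{n+2n^2+3n^3+2n^4+n^5}.
\]
Now combining with the lower bound $c_{[-1,1]n-1}\geq \mu_{[-1,1]}^{n-1}$ established in appendix B.1 (Fekete's lemma applied to the subadditive sequence $\log c_{[-1,1]n}$) gives $b_{[-1,1]n}\geq \mu_{[-1,1]}^{n-1}/(n+2n^2+3n^3+2n^4+n^5)$. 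Since the polynomial in the denominator is increasing in $n$, bounding the denominator by its value at $n+1$ yields the (slightly weaker) form stated in the corollary.

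There is no real obstacle here: both halves are essentially one-line consequences of results already in hand — Fekete's lemma in its superadditive form for the upper bound, and the Hammersley-Welsh inequality together with its subadditive counterpart for the lower bound. The only small care required is the index shift $n\mapsto n-1$ in the second part and the observation that monotonicity of the polynomial denominator lets us relax from $P(n-1)$ to $P(n)$ without invalidating the bound.
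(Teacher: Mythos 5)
Your proof is correct and follows essentially the same route as the paper: the upper bound via Fekete's lemma applied to the superadditive sequence $\{\log b_{[-1,1]n}\}$, and the lower bound by combining the Hammersley--Welsh inequality $c_{[-1,1]n}\leq b_{[-1,1]n+1}P(n)$ with $\mu_{[-1,1]}^{n}\leq c_{[-1,1]n}$. If anything, you are more explicit than the paper, which cites inequality (1) directly where one really needs the intermediate inequality in terms of $b_{[-1,1]n+1}$, and you correctly handle the index shift and the monotonicity of the polynomial denominator.
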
  
 \begin{proof} The left bound follows from (1) and from the fact that $\mu^{n}_{[-1,1]} \leq c_{[-1,1]n}$. The right bound can be proven by using the superadditive sequence $\{\log b_{[-1,1]n}\}_{n=1}^{\infty}$. Thus, $\lim_{n\rightarrow\infty}\sqrt[n]{b_{[-1,1]n}}=\mu_{[-1,1]}$.
  \end{proof}
\subsubsection{$\mathbb{Z}\times\{-1,0,1,2\}$ lattice}
When using the method for the $\mathbb{Z}\times \{-1,0,1,2\}$ lattice we would have that $P_F(A)\leq 1+A+A^2+A^3$. Hence, we would have that
$$c_{[-1,2]n} \leq \sum_{m=0}^{n} h_{[-1,2]n-m}h_{[-1,2]m+1} \leq \sum_{m=0}^{n} b_{[-1,2]n-m}b_{[-1,2]m+1}P_F(m+1)P_F(n-m) $$

$$\leq b_{[-1,2]n+1}\sum_{m=0}^{n} (1+(m+1)+(m+1)^2+(m+1)^3)(1+(n-m)+(n-m)^2+(n-m)^3)$$

$$\leq b_{[-1,2]n+1}((n+1)+2(n+1)^2+3(n+1)^3+4(n+1)^4+3(n+1)^5+2(n+1)^6+(n+1)^7) $$

for all $n\geq1$. Thus, since $b_{[-1,2]n+1} \leq  \mu_{[-1,2]}^{n+1}$ (the proof again uses Fekete's lemma), we have
$$c_{[-1,2]n}\leq \mu^{n+1}_{[-1,2]}((n+1)+2(n+1)^2+3(n+1)^3+4(n+1)^4+3(n+1)^5+2(n+1)^6+(n+1)^7).$$
We have the analogous corollary that
$$\lim_{n\rightarrow\infty}\sqrt[n]{b_{[-1,2]n}}=\mu_{[-1,2]}.$$
\end{document}